\newtheorem{X}{X}
\newtheorem{corollary}[X]{Corollary}
\newtheorem{lemma}[X]{Lemma}
\newtheorem{proposition}[X]{Proposition}
\newtheorem{theorem}[X]{Theorem}
\newtheorem{remark}[X]{Remark}
\newtheorem*{nt}{Notes}
\theoremstyle{nonumberplain}
\newtheorem{proof}{Proof}
\newcommand{\bquote}{\begin{quote}}\newcommand\equote{\end{quote}}
\newcommand{\tstyle}{\textstyle}
\DeclareMathOperator{\End}{End}
\DeclareMathOperator{\Spec}{Spec}
\DeclareMathOperator{\Coker}{Coker}
\DeclareMathOperator{\Ker}{Ker}
\DeclareMathOperator{\Gal}{Gal}
\titleformat*{\section}{\LARGE\bfseries}
\titleformat*{\subsection}{\Large\itshape}
\titleformat*{\subsubsection}{\scshape}
\titleformat*{\paragraph}{\itshape}
\setlist{nolistsep}
\newcommand{\eb}[1]{{\itshape\bfseries#1}}
\renewcommand{\emph}{\eb}
\begin{document}

\title{Addendum to: Values of zeta functions of varieties over finite fields, Amer.
J. Math. 108, (1986), 297-360.}
\author{J. S. Milne}
\date{October 1, 2013}
\maketitle

The original article expressed the special values of the zeta function of a
variety over a finite field in terms of the $\widehat{\mathbb{Z}}$-cohomology
of the variety. As the article was being completed,\footnote{It was submitted
in September 1983. This addendum was originally posted on the author's website
in 2009.} Lichtenbaum conjectured the existence of complexes of sheaves
$\mathbb{Z}{}(r)$ extending the sequence $\mathbb{Z}{}$, $\mathbb{G}_{m}%
[-1]$,\ldots. The complexes given by Bloch's higher Chow groups are known to
satisfy most of the axioms for $\mathbb{Z}{}(r)$. Using Lichtenbaum's
Weil-\'{e}tale topology, we can now give a beautiful restatement of the main
theorem of the original article in terms of $\mathbb{Z}$-cohomology groups.

\subsection{Notations}

\medskip We use the notations of \cite{milne1986v}. For example,
\[
M^{(n)}=M/nM,\quad TM=\varprojlim_{n}\Ker(n\colon M\rightarrow M),\quad
z(f)=\frac{[\Ker(f)]}{[\Coker(f)]},
\]
and $\nu_{s}(r)$ denotes the sheaf of logarithmic de Rham-Witt differentials
on $X_{\text{\'{e}t}}$ (ibid., p.\,307). The symbol $l$ denotes a prime
number, possibly $p$.\qquad

\subsection{Review of abelian groups}

In this subsection, we review some elementary results on abelian groups. An
abelian group $N$ is said to be \emph{bounded} if $nN=0$ for some $n\geq1$,
and a subgroup $M$ of $N$ is \emph{pure} if $M\cap mN=mM$ for all $n\geq1$.

\begin{lemma}
\label{z1}(a) Every bounded abelian group is a direct sum of cyclic groups.

(b) Every bounded pure subgroup $M$ of an abelian group $N$ is a direct
summand of $N$.
\end{lemma}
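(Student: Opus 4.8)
The plan is to prove (a) directly and then deduce (b) from it.

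\emph{Part (a).} Write $nN=0$. Using the primary decomposition $N=\bigoplus_{p\mid n}N_{p}$, and that each $N_{p}$ is annihilated by a power of $p$, it suffices to treat the case $p^{e}N=0$ for a single prime $p$. Here I would write down an explicit system of cyclic generators. Let $N[p]=\{x\in N:px=0\}$, an $\mathbb{F}_{p}$-vector space, and filter it by $V_{i}=N[p]\cap p^{i}N$, so that $N[p]=V_{0}\supseteq V_{1}\supseteq\cdots\supseteq V_{e}=0$. For each $i\in\{1,\dots,e\}$ pick $B_{i}\subseteq V_{i-1}$ lifting a basis of $V_{i-1}/V_{i}$; then $\bigcup_{i}B_{i}$ is a basis of $N[p]$. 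Each $b\in B_{i}$ lies in $p^{i-1}N$, so choose $a_{b}\in N$ with $p^{i-1}a_{b}=b$; then $a_{b}$ has order exactly $p^{i}$. I claim $N=\bigoplus_{b}\langle a_{b}\rangle$. That the $a_{b}$ generate $N$ follows by induction on the order $p^{k}$ of $x\in N$: one has $p^{k-1}x\in V_{k-1}$, so $p^{k-1}x$ is an $\mathbb{F}_{p}$-combination of the $b\in B_{j}$ with $j\ge k$, and subtracting the matching combination of the corresponding $a_{b}$ replaces $x$ by an element of order $<p^{k}$. That the sum is direct is seen by multiplying a hypothetical relation $\sum n_{b}a_{b}=0$ successively by $p^{e-1},p^{e-2},\dots$: at each stage one reads off, inside $N[p]$, a vanishing $\mathbb{F}_{p}$-combination of generators drawn from the linearly independent set $\bigcup_{i}B_{i}$, which forces the surviving coefficients $n_{b}$ to gain a further factor $p$, until $p^{i}\mid n_{b}$ for every $b\in B_{i}$ and hence $n_{b}a_{b}=0$. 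This last bookkeeping is the one genuinely fussy point of (a); choosing the $B_{i}$ when the quotients $V_{i-1}/V_{i}$ are infinite-dimensional uses the axiom of choice.

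\emph{Part (b).} I would first shrink the ambient group. Splitting $M$ into its $p$-primary components (each again pure in $N$, and split off one prime at a time) reduces to the case that $M$ is a $p$-group with $p^{e}M=0$. Then purity gives $M\cap p^{e}N=p^{e}M=0$, so $M$ embeds into $\overline{N}=N/p^{e}N$, which is annihilated by $p^{e}$. A brief computation with purity of $M$ in $N$ (one checks $\overline{M}\cap p^{j}\overline{N}=p^{j}\overline{M}$ for $j\le e$; the case $j\ge e$ is automatic since $p^{e}\overline{N}=0$) shows that the image $\overline{M}$ is pure in $\overline{N}$, and any retraction $\overline{N}\to\overline{M}$ pulls back along $N\to\overline{N}$ to a retraction $N\to M$. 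So it suffices to treat $M$ pure in a bounded group $\overline{N}$. But now $\overline{N}/\overline{M}$ is bounded, hence by (a) a direct sum $\bigoplus_{j}\langle w_{j}\rangle$ with $\operatorname{ord}(w_{j})=p^{k_{j}}$. Lift each $w_{j}$ to $v_{j}\in\overline{N}$; then $p^{k_{j}}v_{j}\in\overline{M}\cap p^{k_{j}}\overline{N}=p^{k_{j}}\overline{M}$, say $p^{k_{j}}v_{j}=p^{k_{j}}\mu_{j}$, and replacing $v_{j}$ by $v_{j}-\mu_{j}$ arranges that this element has order exactly $p^{k_{j}}$. The assignment $w_{j}\mapsto v_{j}-\mu_{j}$ is then a well-defined section of $\overline{N}\to\overline{N}/\overline{M}$, so $\overline{M}$ is a direct summand of $\overline{N}$, and the result follows.

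The genuinely delicate steps are the directness claim in (a) and, in (b), the passage from $N$ to $N/p^{e}N$ together with the verification that purity survives it; everything else is bookkeeping. One could instead try to prove (a) by taking a maximal direct sum of cyclic subgroups that is pure in $N$ and invoking (b) to split off its complement, but that route needs (b) for bounded ambient groups, so deducing (b) from (a) would become circular; hence I prefer the direct construction above.
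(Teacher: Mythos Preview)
Your proof is correct. The paper itself gives no argument at all: it simply cites \cite{fuchs1970}, 17.2 for (a) and \cite{kaplansky1954}, Theorem~7 (or \cite{fuchs1970}, 27.5) for (b). What you have written is essentially a self-contained version of those classical proofs: your argument for (a) is Pr\"ufer's construction via the height filtration $V_i=N[p]\cap p^iN$ of the socle, and your argument for (b) is the standard one---reduce to a bounded ambient group by passing to $N/p^eN$, then apply (a) to the quotient $\overline{N}/\overline{M}$ and use purity to adjust lifts of cyclic generators so that they have the right order. Two minor remarks: in (b) you could bypass the primary decomposition of $M$ entirely and pass directly to $N/nN$ where $nM=0$ (purity still gives $M\cap nN=0$ and the same argument runs), which avoids the bookkeeping of splitting off one prime at a time; and the ``fussy'' directness verification in (a), which you correctly flag, is exactly the point that makes most textbook treatments defer to a reference.
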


\begin{proof}
(a) \cite{fuchs1970}, 17.2.

(b) \cite{kaplansky1954}, Theorem 7, p.\,18, or \cite{fuchs1970}, 27.5.
\end{proof}

\begin{lemma}
\label{z2}Let $M$ be a subgroup of $N,$ and let $l^{n}$ be a prime power. If
$M\cap l^{n}N=0$ and $M$ is maximal among the subgroups with this property,
then $M$ is a direct summand of $N$.
\end{lemma}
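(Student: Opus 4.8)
The plan is to reduce to Lemma~\ref{z1}(b), which says that a bounded pure subgroup is a direct summand; so it suffices to show that $M$ is bounded and pure in $N$. Boundedness is immediate: $l^{n}M\subseteq M\cap l^{n}N=0$, so $l^{n}M=0$. The real content is purity.

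First I would dispose of the prime-to-$l$ part. Since $l^{n}M=0$, any integer $m'$ with $\gcd(m',l)=1$ acts invertibly on $M$; hence for $m=l^{k}m'$ one gets $mM=l^{k}M$, and (checking both inclusions, the harder one again using invertibility of $m'$ on $M$) $M\cap mN=M\cap l^{k}N$. So purity of $M$ is equivalent to the equalities $M\cap l^{k}N=l^{k}M$ for all $k\ge 0$; these are trivial for $k=0$ and, because $M\cap l^{n}N=0=l^{n}M$, automatic for all $k\ge n$. It therefore remains to rule out a $k$ with $1\le k\le n-1$ and $M\cap l^{k}N\ne l^{k}M$. (When $n=1$ no such $k$ exists, so the maximality hypothesis is not needed; it only matters for $n\ge2$.)

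Suppose such a $k$ exists and take it minimal. Choose $x\in M\cap l^{k}N\setminus l^{k}M$ and write $x=l^{k}g$; since $x\notin l^{k}M$ we have $g\notin M$, so $M+\langle g\rangle\supsetneq M$, and maximality yields a nonzero $w\in(M+\langle g\rangle)\cap l^{n}N$. Let $l^{s}$ be the order of $g+M$ in $(M+\langle g\rangle)/M$; from $l^{k}g=x\in M$ we get $1\le s\le k$ and $l^{s}g\in M$, which with $l^{n}M=0$ forces $l^{n+s}w=0$. The crux is to extract from this single $w$ the fact that $l^{s-1}g\in M+l^{n}N$. To this end, write $w=m_{0}+cg$ with $m_{0}\in M$ and $0\le c<l^{s}$ (so $c\ne0$, else $w\in M\cap l^{n}N=0$), factor $c=l^{t}u$ with $u$ prime to $l$, and replace $w$ by $u^{-1}l^{\,s-1-t}w$, where $u^{-1}\in\mathbb{Z}$ satisfies $uu^{-1}\equiv1\pmod{l^{n+s}}$; using $l^{n+s}w=0$, a short computation shows this new element has the form $m^{*}+l^{s-1}g$ with $m^{*}\in M$ and still lies in $l^{n}N$. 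Hence $l^{s-1}g=m^{**}+l^{n}g_{1}$ with $m^{**}\in M$ and $g_{1}\in N$. Multiplying by $l$, the element $l^{s}g-lm^{**}$ lies in $M\cap l^{n+1}N\subseteq M\cap l^{n}N=0$, so $l^{s}g=lm^{**}$; separately, $m^{**}=l^{s-1}g-l^{n}g_{1}\in M\cap l^{s-1}N$, and since $s-1<k$ this equals $l^{s-1}M$ (trivially if $s=1$, by minimality of $k$ otherwise). Therefore $l^{s}g=lm^{**}\in l^{s}M$, whence $x=l^{k-s}\cdot l^{s}g\in l^{k}M$, contradicting the choice of $x$. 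So $M$ is pure, and Lemma~\ref{z1}(b) gives that $M$ is a direct summand of $N$.

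The one genuinely delicate step is the middle of the last paragraph: squeezing the lone nonzero $w\in(M+\langle g\rangle)\cap l^{n}N$ produced by maximality into the usable statement $l^{s-1}g\in M+l^{n}N$, which needs the bookkeeping with $c=l^{t}u$ and the inversion of $u$ modulo $l^{n+s}$. Everything else --- the reductions at the start and the descent on the minimal $k$ at the end --- is routine.
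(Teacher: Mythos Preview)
Your proof is correct and follows essentially the same route as the paper (which cites \cite{fuchs1970}, 27.7): show $M$ is bounded and pure, establish purity via induction on the exponent in $M\cap l^{r}N\subseteq l^{r}M$, and invoke Lemma~\ref{z1}(b). The only simplification in the paper is that it isolates your ``delicate step'' as a clean preliminary lemma (\cite{fuchs1970}, 9.8): if $C$ is maximal with $C\cap B=0$ and $la\in C$, then $a\in B+C$; applied directly to $a=l^{k-1}g$ (rather than to $g$ itself), the coefficient of $a$ in any nonzero element of $\langle C,a\rangle\cap B$ is automatically prime to $l$ (otherwise that element would lie in $C\cap B=0$), so a single B\'ezout identity gives $a\in B+C$ and your $c=l^{t}u$, $u^{-1}l^{s-1-t}$ bookkeeping disappears.
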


\begin{proof}
The subgroup $M$ is bounded because $l^{n}M\subset M\cap l^{n}N=0$. To prove
that it is pure, one shows by induction on $r\geq0$ that $M\cap l^{r}N\subset
l^{r}M$. See \cite{fuchs1970}, 27.7.
\end{proof}

\begin{nt}
(\cite{fuchs1970}, 9.8.) Let $B$ and $C$ be subgroups of an abelian group $A$.
Assume that $C\cap B=0$ and that $C$ is maximal among the subgroups of $A$
with this property. Let $a\in A$. If $pa\in C$ ($p$ prime), then $a\in B+C$.

Proof: We may suppose that $a\notin C$. Then $\langle C,a\rangle$ contains a
nonzero element $b$ of $B$, say, $b=c+ma$ with $c\in C$ and $m\in\mathbb{Z}{}%
$. Here $(m,p)=1$ because otherwise $b=c+m_{0}(pa)\in B\cap C=0.$ Thus
$rm+sp=1$ for some $r,s\in\mathbb{Z}{}$, and%
\[
a=r(ma)+s(pa)=rb-rc+s(pa)\in B+C.
\]

(\cite{fuchs1970}, 27.7). We prove that $M\cap l^{r}N\subset l^{r}M$ for all
$r\geq0$. This being trivially true for $r=0$, we may apply induction on $r$.
Let $m=l^{r+1}a\neq0$, $m\in M$, $a\in N$. Then $r\leq n-1$, because otherwise
$l^{r+1}a\in M\cap l^{n}N=0$. By (9.8), $l^{r}a\in l^{n}N+M$, say,
$l^{r}a=l^{n}c+d$ with $c\in N$, $d\in M$. Then $d=l^{r}a-l^{n}c\in M\cap
l^{r}N$, which equals $l^{r}M$ by the induction hypothesis. From
$m=l^{r+1}a=l^{n+1}c+ld$, we find that $(m-ld)\in M\cap l^{n+1}N=0$, and so
$m=ld\in l^{r+1}M$.
\end{nt}

\noindent Every abelian group $M$ contains a largest divisible subgroup
$M_{\mathrm{div}}$, which is obviously contained the first Ulm subgroup of
$M$, $U(M)\overset{\textup{{\tiny def}}}{=}\bigcap_{n\geq1}nM$. Note that
$U(M/U(M))=0$.

\begin{nt}
A sum of divisible subgroups is obviously divisible. For the last statement,
let $x\in M$ map to the first Ulm subgroup of $M/U(M)$. Then, for each
$n\geq1$, there exists a $y\in M$ such that $ny-x\in U(M)$, and so
$ny-x=ny^{\prime}$ for some $y^{\prime}\in M$. Now $x=n(y-y^{\prime})$, and so
$x$ is divisible by $n$ in $M$, i.e., $x\in U(M)$.
\end{nt}

\begin{proposition}
\label{z3}If $M/nM$ is finite for all $n\geq1$, then $U(M)=M_{\mathrm{div}}$.
\end{proposition}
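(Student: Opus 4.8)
The plan is to establish the only nontrivial inclusion, $U(M)\subseteq M_{\mathrm{div}}$; since $M_{\mathrm{div}}$ is the largest divisible subgroup of $M$, this amounts to showing that $U(M)$ is itself divisible, i.e.\ that $l\cdot U(M)=U(M)$ for every prime $l$. The whole difficulty is to extract from the hypothesis enough control of the $l$-torsion of $M$: the assertion fails for a reduced $l$-group with non-zero first Ulm subgroup (such groups exist), and for those $M/lM$ is forced to be infinite, so it is exactly the finiteness of $M/lM$ that must be exploited.

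First I would show, for each prime $l$, that the $l$-primary torsion subgroup $C:=M[l^{\infty}]$ splits as $C=D_{l}\oplus F_{l}$ with $D_{l}$ divisible and $F_{l}$ finite; this is the only step that uses the hypothesis, and only in the form ``$M/lM$ finite''. Since $C\cap lM=lC$ (if $z=lm\in C$ then also $m\in C$), the inclusion $C\hookrightarrow M$ induces an injection $C/lC\hookrightarrow M/lM$, so $C/lC$ is finite. Let $B$ be a basic subgroup of the $l$-group $C$ (\cite{fuchs1970}): it is a direct sum of cyclic $l$-groups with $B/lB\cong C/lC$, hence finite, say $l^{N}B=0$. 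As $C/B$ is divisible we get $C=l^{N}C+B$ and $C=lC+B$, so $l^{N}C=l^{N}(lC+B)=l^{N+1}C$; thus $l^{N}C$ is $l$-divisible, hence divisible (being a torsion $l$-group), hence a direct summand $C=D_{l}\oplus F_{l}$ with $D_{l}=l^{N}C$, and $F_{l}\cong C/D_{l}$ is a quotient of $B$, so finite.

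Now set $D=\bigoplus_{l}D_{l}$, the sum over all primes. It is a divisible subgroup, so $D\subseteq M_{\mathrm{div}}\subseteq U(M)$, and, being divisible, $D$ is a direct summand: $M=D\oplus M'$. Then $U(M)=D\oplus U(M')$ (as $U(D)=D$), so it suffices to prove $U(M')$ divisible; note that $M'/nM'$ is finite for all $n$ and that $M'[l^{\infty}]\cong M[l^{\infty}]/D_{l}\cong F_{l}$ is finite, so $M'[l]$ is finite for every $l$. Fix a prime $l$ and $x\in U(M')$. Since $x\in lM'$, the set $S=\{y\in M':ly=x\}$ is a non-empty coset of $M'[l]$, hence finite. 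If no element of $S$ lay in $U(M')=\bigcap_{n}nM'$, then for each $y\in S$ pick $n_{y}\geq1$ with $y\notin n_{y}M'$ and let $N$ be a common multiple of the (finitely many) $n_{y}$; then $S\cap NM'=\emptyset$. But $x\in U(M')\subseteq lN\cdot M'$, say $x=lNz$, whence $Nz\in S\cap NM'$ --- a contradiction. Hence some $y\in S$ lies in $U(M')$, i.e.\ $x\in l\cdot U(M')$; as $l$ and $x$ are arbitrary, $U(M')$ is divisible, and therefore so is $U(M)=D\oplus U(M')$. I expect the basic-subgroup argument of the second paragraph to be the only genuine obstacle, the remaining steps being formal.
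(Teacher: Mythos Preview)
Your argument is correct. The reduction in the second paragraph is sound: purity of $C=M[l^{\infty}]$ in $M$ gives $C/lC\hookrightarrow M/lM$, a basic subgroup $B$ of $C$ (Kulikov) then has $B/lB\cong C/lC$ finite, so $B$ is a finite direct sum of finite cyclic $l$-groups and hence bounded, and your computation $l^{N}C=l^{N+1}C$ then yields the desired splitting $C=D_{l}\oplus F_{l}$. The final pigeonhole step is clean: once $M'[l]$ is finite, the fibre $S=\{y:ly=x\}$ is finite, and the common-multiple trick forces some $y\in S$ into $U(M')$.

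The paper proceeds quite differently. It argues by contradiction in $M/U(M)$: if $x\in U(M)\smallsetminus lU(M)$, then the elements $x_{n}$ with $l^{n}x_{n}=x$ have exact order $l^{n}$ in $M/U(M)$, so $M/U(M)$ has finite $l$-subgroups $S$ of arbitrarily large $\mathbb{F}_{l}$-rank. Using Zorn's lemma and the purity criterion set up in Lemmas~\ref{z1}--\ref{z2}, each such $S$ is enlarged to a bounded direct summand $S'$ of $M/U(M)$; then $S'^{(l)}\hookrightarrow (M/U(M))^{(l)}\cong M^{(l)}$ forces $\dim_{\mathbb{F}_{l}}M^{(l)}$ to be unbounded, contradicting the hypothesis. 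Thus the paper stays inside the framework of its preliminary lemmas on pure bounded subgroups and never invokes basic subgroups, whereas you import Kulikov's theorem to get a structural decomposition of the torsion and then finish with an elementary finiteness argument. Your route is perhaps more transparent about \emph{why} the finiteness of $M/lM$ matters (it bounds the $l$-torsion modulo divisible), while the paper's route makes the earlier lemmas do all the work.
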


\begin{proof}
(Cf. \cite{milne1988}, 3.3.) If $U(M)$ is not divisible, then there exists a
prime $l$ such that $U(M)\neq lU(M)$. Fix such an $l$, and let $x\in
U(M)\smallsetminus lU(M)$. For each $n\geq1$, there exists an element $x_{n}$
of $M$ such that $l^{n}x_{n}=x$. In fact $x_{n}$ has order exactly $l^{n}$ in
$M/U(M)$, and so $M/U(M)$ contains elements of arbitrary high $l$-power order.

Let $S$ be a finite $l$-subgroup of $M/U(M)$. As $U(M/U(M))=0$ and $S$ is
finite, there exists an $n$ such that $S\cap l^{n}(M/U(M))=0$. By Zorn's
lemma, there exists a subgroup $N$ of $M/U(M)$ that is maximal among those
satisfying (a) $N\supset S$ and (b) $N\cap l^{n}(M/U(M))=0$. Moreover, $N$ is
maximal with respect to (b) alone. Therefore $N$ is a direct summand of
$M/U(M)$ (Lemma \ref{z2}). As $N$ is bounded (in fact, $l^{n}N=0$), it is a
direct sum of cyclic groups (Lemma \ref{z1}). We conclude that $S$ is
contained in a finite $l$-subgroup $S^{\prime}$ of $M/U(M)$ that is a direct
summand of $M/U(M)$. Note that%
\[
S^{\prime(l)}\hookrightarrow(M/U(M))^{(l)}\simeq M^{(l)},
\]
and so $\dim_{\mathbb{F}_{l}}M^{(l)}\geq\dim_{\mathbb{F}{}_{l}}S^{\prime(l)}$.
But is clear (from the first paragraph) that $\dim_{\mathbb{F}{}_{l}}%
S^{\prime(l)}$ is unbounded, and so this contradicts the hypothesis on $M$.
\end{proof}

\begin{nt}
Cf. Fuchs, Vol II, 65.1.
\end{nt}

\begin{corollary}
\label{z4}If $TM=0$ and all quotients $M/nM$ are finite, then $U(M)$ is
uniquely divisible (= divisible and torsion-free = a $\mathbb{Q}$-vector space).
\end{corollary}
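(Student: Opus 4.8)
The plan is to feed both hypotheses into Proposition \ref{z3}. Since every quotient $M/nM$ is finite, that proposition gives $U(M)=M_{\mathrm{div}}$; in particular $U(M)$ is divisible. Hence it remains only to show that $U(M)$ is torsion-free, the parenthetical equivalences in the statement being standard: if a group is both divisible and torsion-free, then multiplication by each $n\geq1$ is an automorphism of it, which is exactly the data of a $\mathbb{Q}$-vector space structure.

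To prove torsion-freeness I would argue by contradiction, now using the hypothesis $TM=0$. Suppose $U(M)$ contains an element $x$ of prime order $l$. Exploiting the divisibility of $U(M)$, choose recursively $x_{1}=x$ and, for each $n\geq1$, an element $x_{n+1}\in U(M)$ with $lx_{n+1}=x_{n}$. Then $l^{n}x_{n}=l^{n-1}x_{n-1}=\cdots=lx_{1}=0$, so $x_{n}\in\Ker(l^{n}\colon M\to M)$, and $(x_{n})_{n}$ is a coherent sequence for the transition maps given by multiplication by $l$; it therefore defines an element of $T_{l}M:=\varprojlim_{n}\Ker(l^{n}\colon M\to M)$, which is nonzero because $x_{1}=x\neq0$. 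Since $TM\cong\prod_{l}T_{l}M$, this contradicts $TM=0$, so $U(M)$ is torsion-free, and the corollary follows.

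Equivalently, one can phrase the second step without choosing $x$: the Tate module is left exact, so from $U(M)\subseteq M$ one gets $TU(M)\hookrightarrow TM=0$; on the other hand the torsion subgroup of the divisible group $U(M)$ is itself divisible, hence a direct sum of copies of the groups $\mathbb{Q}_{l}/\mathbb{Z}_{l}$, and since $T(\mathbb{Q}_{l}/\mathbb{Z}_{l})=\mathbb{Z}_{l}$ while $T$ carries direct summands to direct summands, the vanishing of $TU(M)$ forces that torsion subgroup to be $0$. Either way there is no genuinely hard step; the only point deserving a little care is the bookkeeping identity $TM\cong\prod_{l}T_{l}M$ — equivalently, the remark that a torsion element of a divisible group lifts indefinitely along multiplication-by-$l$ to yield an honest nonzero element of the Tate module — together with the structure theory of divisible abelian groups used in the parenthetical variant.
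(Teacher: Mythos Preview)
Your proof is correct and follows essentially the same approach as the paper. The paper's own argument is the one-line observation that $TM=0$ forces $M_{\mathrm{div}}$ to be torsion-free while Proposition~\ref{z3} gives $U(M)=M_{\mathrm{div}}$; you carry out the same two steps (in the opposite order) and spell out the Tate-module argument behind the first implication, which the paper leaves implicit.
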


\begin{proof}
The first condition implies that $M_{\mathrm{div}}$ is torsion-free, and the
second that $U(M)=M_{\mathrm{div}}$.
\end{proof}

For an abelian group $M$, we let $M_{l}$ denote the completion of $M$ with
respect to the $l$-adic topology. Every continuous homomorphism from $M$ into
a complete separated group factors uniquely through $M_{l}$. In particular,
the quotient maps $M\rightarrow M/l^{n}M$ extend to homomorphisms
$M_{l}\rightarrow M/l^{n}M$, and these induce an isomorphism $M_{l}%
\rightarrow\varprojlim_{n}M/l^{n}M$. The kernel of $M\rightarrow M_{l}$ is
$\bigcap\nolimits_{n}l^{n}M$. See \cite{fuchs1970}, \S 13.

\begin{lemma}
\label{z5}Let $N$ be a torsion-free abelian group. If $N/lN$ is finite, then
the $l$-adic completion of $N$ is a free finitely generated $\mathbb{Z}{}_{l}$-module.
\end{lemma}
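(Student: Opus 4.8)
The plan is to identify the $l$-adic completion $N_{l}$ with a free module $\mathbb{Z}_{l}^{\,d}$, where $d=\dim_{\mathbb{F}_{l}}N/lN$, by comparing $N$ with the free abelian group of the same rank. First I would record the relevant numerology. Because $N$ is torsion-free, multiplication by $l$ is injective on $N$, and for every $n\geq1$ the assignment $x+l^{n}N\mapsto lx+l^{n+1}N$ gives a short exact sequence
\[
0\longrightarrow N/l^{n}N\xrightarrow{\,l\,}N/l^{n+1}N\longrightarrow N/lN\longrightarrow0 ;
\]
its left-hand map is injective precisely because $N$ has no $l$-torsion (if $lx=l^{n+1}y$ then $l(x-l^{n}y)=0$, so $x=l^{n}y\in l^{n}N$). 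By induction on $n$, $N/l^{n}N$ is finite of order $l^{\,dn}$.

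Next, choose $x_{1},\dots,x_{d}\in N$ lifting an $\mathbb{F}_{l}$-basis of $N/lN$, let $F=\mathbb{Z}^{d}$, and let $\varphi\colon F\to N$ send the standard basis to $x_{1},\dots,x_{d}$. Since the $x_{i}$ generate $N/lN$ we have $\varphi(F)+lN=N$, and iterating gives $\varphi(F)+l^{n}N=N$ for all $n$, so the induced map $\bar\varphi_{n}\colon F/l^{n}F\to N/l^{n}N$ is surjective. As $F/l^{n}F\cong(\mathbb{Z}/l^{n}\mathbb{Z})^{d}$ also has order $l^{\,dn}$, the map $\bar\varphi_{n}$ is a surjection between finite groups of equal order, hence an isomorphism, for every $n\geq1$; and these isomorphisms are compatible with the transition maps of the two towers.

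Finally, passing to the inverse limit and using the identification of the $l$-adic completion of an abelian group $M$ with $\varprojlim_{n}M/l^{n}M$ recalled above,
\[
N_{l}=\varprojlim_{n}N/l^{n}N\;\cong\;\varprojlim_{n}F/l^{n}F\;=\;\varprojlim_{n}(\mathbb{Z}/l^{n}\mathbb{Z})^{d}\;\cong\;\mathbb{Z}_{l}^{\,d},
\]
a free $\mathbb{Z}_{l}$-module of rank $d$, which is the assertion. I do not expect a genuine obstacle; the one step deserving care is the exactness of the displayed short exact sequence — i.e.\ the injectivity of multiplication by $l$ on $N/l^{n}N$ — since this is exactly where torsion-freeness of $N$ enters, and it is what forces both orders in the comparison to equal $l^{\,dn}$. (The argument incidentally shows that $\varphi$ is injective, with $\varphi(F)$ free of rank $d$ and $N/\varphi(F)$ an $l$-divisible group, though none of this is needed for the statement.)
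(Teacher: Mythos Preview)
Your proof is correct and follows essentially the same route as the paper: lift an $\mathbb{F}_{l}$-basis of $N/lN$ to elements of $N$, show by the iteration $\varphi(F)+l^{n}N=N$ that these elements generate each $N/l^{n}N$, use the order count $|N/l^{n}N|=l^{dn}$ to upgrade surjectivity to an isomorphism $F/l^{n}F\simeq N/l^{n}N$, and pass to the inverse limit. The only cosmetic difference is that you make the order computation explicit via the short exact sequence (and hence flag exactly where torsion-freeness is used), whereas the paper simply asserts $|N/l^{n}N|=l^{nr}$ and then writes out the $\mathbb{Z}_{l}$-coordinates of a general element of $N_{l}$ by hand.
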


\begin{proof}
Let $y_{1},\ldots,y_{r}$ be elements of $N$ that form a basis for $N/lN$. Then%
\[
N=\tstyle\sum\mathbb{Z}{}y_{i}+lN=\tstyle\sum\mathbb{Z}{}y_{i}+l(\tstyle\sum
\mathbb{Z}{}y_{i}+lN)=\cdots=\tstyle\sum\mathbb{Z}{}y_{i}+l^{n}N,
\]
and so $y_{1},\ldots,y_{r}$ generate $N/l^{n}N$. As $N/l^{n}N$ has order
$l^{nr}$, it is in fact a free $\mathbb{Z}{}/l^{n}\mathbb{Z}{}$-module with
basis $\{y_{1},\ldots,y_{r}\}$. Let $a\in N_{l}$, and let $a_{n}$ be the image
of $a$ in $N/l^{n+1}N$. Then%
\[
a_{n}=c_{n,1}y_{1}+\cdots+c_{n,r}y_{r}%
\]
for some $c_{n,i}\in\mathbb{Z}{}/l^{n+1}\mathbb{Z}{}$. As $a_{n}$ maps to
$a_{n-1}$ in $N/l^{n}N$ and the $c_{n,i}$ are unique, $c_{n,i}$ maps to
$c_{n-1,i}$ in $\mathbb{Z}{}/l^{n}\mathbb{Z}{}$. Hence $(c_{n,i}%
)_{n\in\mathbb{N}{}}\in\mathbb{Z}{}_{l}$, and it follows that $\{y_{1}%
,\ldots,y_{r}\}$ is a basis for $N_{l}$ as a $\mathbb{Z}{}_{l}$-module.
\end{proof}

\begin{proposition}
\label{z6}Let $\phi\colon M\times N\rightarrow\mathbb{Z}{}$ be a bi-additive
pairing of abelian groups whose extension $\phi_{l}\colon M_{l}\times
N_{l}\rightarrow\mathbb{Z}{}_{l}$ to the $l$-adic completions has trivial left
kernel. If $N/lN$ is finite and $\bigcap\nolimits_{n}l^{n}M=0$, then $M$ is
free and finitely generated.
\end{proposition}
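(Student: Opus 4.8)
The plan is to build a homomorphism $\Phi\colon M\to\Hom(N,\mathbb{Z})$ out of $\phi$, show it is injective, and show that the target $\Hom(N,\mathbb{Z})$ is a finitely generated free abelian group; since a subgroup of such a group is again free and finitely generated, this finishes the proof. The map is of course $\Phi(m)=\phi(m,-)$. To see that $\Phi$ is injective, suppose $\phi(m,n)=0$ for all $n\in N$; writing $\hat m$ for the image of $m$ in $M_l$, I get $\phi_l(\hat m,\hat n)=0$ for every $n\in N$ (with image $\hat n\in N_l$), and since the image of $N$ is dense in $N_l$ and $\phi_l(\hat m,-)\colon N_l\to\mathbb{Z}_l$ is continuous, $\phi_l(\hat m,-)$ vanishes on all of $N_l$; hence $\hat m$ lies in the left kernel of $\phi_l$, so $\hat m=0$, and since the kernel of $M\to M_l$ is $\bigcap_n l^nM=0$ this gives $m=0$.

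The substantive point is that $\Hom(N,\mathbb{Z})$ is free of finite rank whenever $N/lN$ is finite. First reduce to the case that $N$ is torsion-free: replacing $N$ by $N/N_{\mathrm{tors}}$ changes neither $\Hom(N,\mathbb{Z})$ (as $\mathbb{Z}$ is torsion-free) nor the finiteness of $N/lN$ (the new quotient being a quotient of the old). For torsion-free $N$, lift an $\mathbb{F}_l$-basis of $N/lN$ to $y_1,\dots,y_s\in N$ and set $N_0=\mathbb{Z}y_1+\dots+\mathbb{Z}y_s$; this is a finitely generated subgroup of a torsion-free group, hence free. Exactly as in the proof of Lemma \ref{z5} one has $N=N_0+lN$, and iterating gives $N=N_0+l^nN$ for all $n\ge1$, so $N/N_0$ is $l$-divisible. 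Any homomorphism from an $l$-divisible group to $\mathbb{Z}$ has image in $\bigcap_n l^n\mathbb{Z}=0$, so $\Hom(N/N_0,\mathbb{Z})=0$; applying $\Hom(-,\mathbb{Z})$ to $0\to N_0\to N\to N/N_0\to0$ and using left-exactness then embeds $\Hom(N,\mathbb{Z})$ into $\Hom(N_0,\mathbb{Z})\cong\mathbb{Z}^{s}$, which is what I wanted.

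Putting the pieces together, $M\cong\Phi(M)$ sits inside $\mathbb{Z}^{s}$, so $M$ is free and finitely generated, of rank at most $\dim_{\mathbb{F}_l}N/lN$. The main obstacle — and the reason the second step cannot be skipped — is that the hypotheses only constrain $M$ and $N$ through their $l$-adic completions: the obvious route is to use the trivial left kernel of $\phi_l$ to embed $M_l$ into $\Hom_{\mathbb{Z}_l}(N_l,\mathbb{Z}_l)$, which is free of finite rank over $\mathbb{Z}_l$ (note $N_l$ is finitely generated over $\mathbb{Z}_l$ because $N/lN$ is finite), but this only yields $M\hookrightarrow M_l\hookrightarrow\mathbb{Z}_l^{s}$, and subgroups of $\mathbb{Z}_l^{s}$ need not be finitely generated over $\mathbb{Z}$ — e.g. $\mathbb{Z}_{(l)}$, or $\mathbb{Z}_l$ itself, both satisfying $\bigcap_n l^n(-)=0$. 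The extra leverage is that $\phi$ takes values in $\mathbb{Z}$ and not merely in $\mathbb{Z}_l$: this forces the image of $\Phi$ into $\Hom(N,\mathbb{Z})$, and the vanishing of $\Hom(-,\mathbb{Z})$ on the $l$-divisible cokernel $N/N_0$ is exactly what lets one replace the ambient $\mathbb{Z}_l^{s}$ by $\mathbb{Z}^{s}$.
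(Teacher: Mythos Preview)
Your proof is correct and follows essentially the same approach as the paper's. Both reduce to torsion-free $N$, lift an $\mathbb{F}_l$-basis of $N/lN$ to $y_1,\ldots,y_r\in N$, and embed $M$ into $\mathbb{Z}^r$ via $x\mapsto(\phi(x,y_i))_i$; the paper establishes injectivity in one step using that the $y_i$ form a $\mathbb{Z}_l$-basis of $N_l$ (Lemma~\ref{z5}), while you factor through $\Hom(N,\mathbb{Z})$ and argue via density of $N$ in $N_l$ and $l$-divisibility of $N/N_0$, but these are the same underlying facts reorganized.
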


\begin{proof}
We may suppose that $N$ is torsion-free. As $\bigcap\nolimits_{n}l^{n}M=0$,
the map $M\rightarrow M_{l}$ is injective. Choose elements $y_{1},\ldots
,y_{r}$ of $N$ that form a basis for $N/lN$. According to the proof of Lemma
\ref{z5}, they form a basis for $N_{l}$ as a $\mathbb{Z}{}_{l}$-module.
Consider the map%
\[
x\mapsto(\phi(x,y_{1}),\ldots,\phi(x,y_{r}))\colon M\rightarrow\mathbb{Z}%
{}^{r}.
\]
If $x$ is in the kernel of this map, then $\phi_{l}(x,y)=0$ for all $y\in
N_{l}$, and so $x=0$. Therefore the map $M$ injects into $\mathbb{Z}{}^{r}$,
which completes the proof.
\end{proof}

\subsection{Review of Bloch's complex}

Let $X$ be a smooth variety over a field $k$. We take $\mathbb{Z}{}(r)$ to be
the complex of sheaves on $X$ defined by Bloch's higher Chow groups. For the
definition of Bloch's complex, and a review of its basic properties, we refer
the reader to the survey article \cite{geisser2005}.

The properties of $\mathbb{Z}{}(r)$ that we shall need are the following.

\begin{description}
\item[(A)$_{n_{0}}$] For all integers $n_{0}$ prime to the characteristic of
$k$, the cycle class map%
\[
\left(  \mathbb{Z}{}(r)\overset{n_{0}}{\longrightarrow}\mathbb{Z}{}(r)\right)
\rightarrow\mu_{n_{0}}^{\otimes r}[0]
\]
is a quasi-isomorphism (\cite{geisserL2001}, 1.5).

\item[(A)$_{p}$] For all integers $s\geq1$, the cycle class map%
\[
\left(  \mathbb{Z}{}(r)\overset{p^{s}}{\longrightarrow}\mathbb{Z}{}(r)\right)
\rightarrow\nu_{s}(r)[-r-1]
\]
is a quasi-isomorphism (\cite{geisserL2000}, Theorem 8.5).

\item[(B)] There exists a cycle class map $\mathrm{CH}^{r}(X)\rightarrow
H^{2r}(X_{\text{\'{e}t}},\mathbb{Z}{}(r))$ compatible (via (A)) with the cycle
class map into $H^{2r}(X_{\text{\'{e}t}},\widehat{\mathbb{Z}}(r)){}$. Here
$\mathrm{CH}^{r}(X)$ is the Chow group.

\item[(C)] There exist pairings%
\[
\mathbb{Z}{}(r)\otimes^{L}\mathbb{Z}{}(s)\rightarrow\mathbb{Z}{}(r+s)
\]
compatible (via (A)$_{n}$) with the natural pairings%
\[
\mu_{n}^{\otimes r}\times\mu_{n}^{\otimes s}\rightarrow\mu_{n}^{\otimes
r+s}\,,\quad\text{gcd}(n,p)=1.
\]
When $k$ is algebraically closed, there exists a trace map $H^{2d}%
(X_{\text{\'{e}t}},\mathbb{Z}{}(d))\rightarrow\mathbb{Z}{}$ compatible (via
(A)$_{n}$) with the usual trace map in \'{e}tale cohomology.
\end{description}

\subsection{Values of zeta functions}

Throughout this section, $X$ is a smooth projective variety over a finite
field $k$ with $q$ elements, $r$ is an integer, and $\rho_{r}$ is the rank of
the group of numerical equivalence classes of algebraic cycles of codimension
$r$ on $X$.

We list the following conjectures for reference.

\begin{description}
\item[$T^{r}(X)$] (Tate conjecture): The order of the pole of the zeta
function $Z(X,t)$ at $t=q^{-r}$ is equal to $\rho_{r}$.

\item[$T^{r}(X,l)$] ($l$-Tate conjecture): The map $\mathrm{CH}^{r}%
(X)\otimes\mathbb{Q}{}_{l}\rightarrow H^{2r}(\bar{X}_{\text{\'{e}t}%
},\mathbb{Q}{}_{l}(r))^{\Gamma}$ is surjective.

\item[$S^{r}(X,l)$] (semisimplicity at $1)$: The map $H^{2r}(\bar
{X}_{\text{\'{e}t}},\mathbb{Q}{}_{l}(r))^{\Gamma}\rightarrow H^{2r}(\bar
{X}_{\text{\'{e}t}},\mathbb{Q}{}_{l}(r))_{\Gamma}$ induced by the identity map
is bijective.
\end{description}

\noindent The statement $T^{r}(X)$ is implied by the conjunction of
$T^{r}(X,l)$, $T^{d-r}(X,l)$, and $S^{r}(X,l)$ for a single $l$, and implies
$T^{r}(X,l)$, $T^{d-r}(X,l)$, $S^{r}(X,l)$, $S^{d-r}(X,l)$ for all $l$ (see
\cite{tate1994}, 2.9; \cite{milne2007aim}, 1.11).

Let $V$ be a variety over a finite field $k$. To give a sheaf on
$V_{\text{\'{e}t}}$ is the same as giving a sheaf on $\bar{V}_{\text{\'{e}t}}$
together with a continuous action of $\Gamma\overset{\textup{{\tiny def}}}%
{=}\Gal(\bar{k}/k)$. Let $\Gamma_{0}$ be the subgroup of $\Gamma$ generated by
the Frobenius element (so $\Gamma_{0}\simeq\mathbb{Z}{}$). The Weil-\'{e}tale
topology is defined so that to give a sheaf on $V_{\text{W\'{e}t}}$ is the
same as giving a sheaf on $\bar{V}_{\text{\'{e}t}}$ together with an action of
$\Gamma_{0}$ (\cite{lichtenbaum2005}). For example, for $V=\Spec k$, the
sheaves on $V_{\text{\'{e}t}}$ are the discrete $\Gamma$-modules, and the
sheaves on $V_{\text{W\'{e}t}}$ are the $\Gamma_{0}$-modules. In the
Weil-\'{e}tale topology, the Hochschild-Serre spectral sequence becomes
\begin{equation}
H^{i}(\Gamma_{0},H^{j}(\bar{V}_{\text{\'{e}t}},F))\implies H^{i+j}%
(V_{\text{W\'{e}t}},F). \label{e10}%
\end{equation}
Since
\begin{equation}
H^{i}(\Gamma_{0},M)=M^{\Gamma_{0}},\,M_{\Gamma_{0}},\,0,\,0,\,\ldots
\ \text{for }i=0,1,2,3,\ldots, \label{e11}%
\end{equation}
this gives exact sequences%
\[
0\rightarrow H^{i-1}(\bar{V}_{\text{\'{e}t}},F)_{\Gamma_{0}}\rightarrow
H^{i}(V_{\text{W\'{e}t}},F)\rightarrow H^{i}(\bar{V}_{\text{\'{e}t}%
},F)^{\Gamma_{0}}\rightarrow0,\quad\text{all }i\geq0.
\]
If $F$ is a sheaf on $V_{\text{\'{e}t}}$ such that the groups $H^{j}(\bar
{V}_{\text{\'{e}t}},F)$ are torsion, then the Hochschild-Serre spectral
sequence for the \'{e}tale topology gives exact sequences%
\[
0\rightarrow H^{i-1}(\bar{V}_{\text{\'{e}t}},F)_{\Gamma}\rightarrow
H^{i}(V_{\text{\'{e}t}},F)\rightarrow H^{i}(\bar{V}_{\text{\'{e}t}}%
,F)^{\Gamma}\rightarrow0,\quad\text{all }i\geq0.
\]
The two spectral sequences are compatible, and so, for such a sheaf $F$, the
canonical maps $H^{i}(V_{\text{\'{e}t}},F)\rightarrow H^{i}(V_{\text{W\'{e}t}%
},F)$ are isomorphisms.

Let $X$ be a smooth projective variety over a finite field, and let%
\[
e^{2r}\colon H^{2r}(X_{\text{W\'{e}t}},\mathbb{Z}{}(r))\rightarrow
H^{2r+1}(X_{\text{W\'{e}t}},\mathbb{Z}{}(r))
\]
denote cup-product with the canonical element of $H^{1}(\Gamma_{0}%
,\mathbb{Z}{})=H^{1}(k_{\mathrm{Wet}},\mathbb{Z}{})$, and let%
\[
\chi(X_{\text{W\'{e}t}},\mathbb{Z}(r))=\prod\nolimits_{i\neq2r,2r+1}%
[H^{i}(X_{\text{W\'{e}t}},\mathbb{Z}{}(r))]^{(-1)^{i}}z(e^{2r})
\]
when all terms are defined and finite. Let%
\[
\chi(X,\mathcal{O}{}_{X},r)=\sum\nolimits_{i\leq r,\,j}(-1)^{i+j}(r-i)\dim
H^{j}(X,\Omega_{X/k}^{i})\text{.}%
\]
We define $\chi^{\prime}(X_{\text{W\'{e}t}},\mathbb{Z}(r))$ as for
$\chi(X_{\text{W\'{e}t}},\mathbb{Z}(r))$, but with each group $H^{i}%
(X_{\text{W\'{e}t}},\mathbb{Z}{}(r))$ replaced by its quotient
\[
H^{i}(X_{\text{W\'{e}t}},\mathbb{Z}{}(r))^{\prime}\overset{\textup{{\tiny def}%
}}{=}\frac{H^{i}(X_{\text{W\'{e}t}},\mathbb{Z}{}(r))}{U(H^{i}%
(X_{\text{W\'{e}t}},\mathbb{Z}{}(r)))}.
\]

\begin{theorem}
\label{z7}Let $X$ be a smooth projective variety over a finite field such that
the Tate conjecture $T^{r}(X)$ is true for some integer $r\geq0$. Then
$\chi^{\prime}(X_{\text{W\'{e}t}},\mathbb{Z}{}(r))$ is defined, and%
\begin{equation}
\lim_{t\rightarrow q^{-r}}Z(X,t)\cdot(1-q^{r}t)^{\rho_{r}}=\pm\chi^{\prime
}(X_{\text{W\'{e}t}},\mathbb{Z}(r))\cdot q^{\chi(X,\mathcal{O}{}_{X}%
,r)}.\label{e6}%
\end{equation}
In particular, the groups $H^{i}(X_{\text{W\'{e}t}},\mathbb{Z}{}(r))^{\prime}$
are finite for $i\neq2r,2r+1$. For $i=2r,2r+1$, they are finitely generated.
For all $i$, $U(H^{i}(X_{\text{W\'{e}t}},\mathbb{Z}{}(r)))$ is uniquely divisible.
\end{theorem}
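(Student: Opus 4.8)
The plan is to deduce the theorem from the main theorem of \cite{milne1986v}, which already expresses the left-hand side of \eqref{e6} as an alternating product of orders of the \'{e}tale cohomology groups of $X$ with coefficients $\mu_{l^{s}}^{\otimes r}$ and $\nu_{s}(r)$ (over all primes $l$, including $p$), times $q^{\chi(X,\mathcal{O}_{X},r)}$; properties (A)--(C) of Bloch's complex supply the bridge. First I would read off, from the Weil-\'{e}tale Hochschild--Serre spectral sequence \eqref{e10} and the computation \eqref{e11} of $H^{\ast}(\Gamma_{0},-)$, the short exact sequences
\[
0\longrightarrow H^{i-1}(\bar{X}_{\text{\'{e}t}},\mathbb{Z}(r))_{\Gamma_{0}}\longrightarrow H^{i}(X_{\text{W\'{e}t}},\mathbb{Z}(r))\longrightarrow H^{i}(\bar{X}_{\text{\'{e}t}},\mathbb{Z}(r))^{\Gamma_{0}}\longrightarrow 0
\]
valid for every $i$, and note that, compatibly with them, $e^{2r}$ is induced by the canonical map $H^{2r}(\bar{X}_{\text{\'{e}t}},\mathbb{Z}(r))^{\Gamma_{0}}\to H^{2r}(\bar{X}_{\text{\'{e}t}},\mathbb{Z}(r))_{\Gamma_{0}}$. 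So the statement is governed by the $\Gamma_{0}$-modules $M^{j}=H^{j}(\bar{X}_{\text{\'{e}t}},\mathbb{Z}(r))$ and the Frobenius acting on them, and the work splits into controlling these groups and a bookkeeping of Euler characteristics.

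The basic input is property (A): for each $n$ it gives an exact sequence $0\to M^{j}/n\to H^{j}(\bar{X}_{\text{\'{e}t}},\mathbb{Z}(r)/n)\to M^{j+1}[n]\to 0$ with finite middle term ($H^{j}(\bar{X}_{\text{\'{e}t}},\mu_{n}^{\otimes r})$ when $\gcd(n,p)=1$, $H^{j-r-1}(\bar{X}_{\text{\'{e}t}},\nu_{s}(r))$ when $n=p^{s}$) vanishing for $j$ outside a bounded range. Since $\mathbb{Z}(r)/n$ is a bounded complex with finite cohomology on $\bar{X}_{\text{\'{e}t}}$, Weil-\'{e}tale cohomology with these coefficients agrees with \'{e}tale cohomology and is finite; hence $H^{i}(X_{\text{W\'{e}t}},\mathbb{Z}(r))/n$, and likewise $H^{i}(X_{\text{W\'{e}t}},\mathbb{Z}(d-r))/n$, is finite for all $i$ and $n$, and Proposition \ref{z3} shows the divisible subgroup of each such group equals its first Ulm subgroup $U$. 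Passing to the $l$-adic limit (legitimate because the finite-coefficient groups are finite, so the relevant $\varprojlim^{1}$ vanish) identifies the $l$-adic completion of $H^{i}(X_{\text{W\'{e}t}},\mathbb{Z}(r))$ with the $l$-adic Weil-\'{e}tale group $H^{i}(X_{\text{W\'{e}t}},\mathbb{Z}_{l}(r))$; by the Weil conjectures together with the Tate and semisimplicity conjectures --- which $T^{r}(X)$ implies for $T^{r}(X,l)$, $T^{d-r}(X,l)$, $S^{r}(X,l)$, $S^{d-r}(X,l)$ and all $l$ --- this group is finite for $i\neq 2r,2r+1$ (and zero for almost all $l$) and is free of rank $\rho_{r}$ over $\mathbb{Z}_{l}$ for $i=2r,2r+1$.

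These facts give the structural assertions. The kernel of $H^{i}(X_{\text{W\'{e}t}},\mathbb{Z}(r))\to\prod_{l}H^{i}(X_{\text{W\'{e}t}},\mathbb{Z}_{l}(r))$ is $U(H^{i}(X_{\text{W\'{e}t}},\mathbb{Z}(r)))$, so for $i\neq 2r,2r+1$ the quotient $H^{i}(X_{\text{W\'{e}t}},\mathbb{Z}(r))'$ embeds into a product of finite groups almost all zero, hence is finite. For $i=2r,2r+1$ the cup-product pairing from property (C),
\[
H^{i}(X_{\text{W\'{e}t}},\mathbb{Z}(r))\times H^{2d+1-i}(X_{\text{W\'{e}t}},\mathbb{Z}(d-r))\longrightarrow H^{2d+1}(X_{\text{W\'{e}t}},\mathbb{Z}(d))\longrightarrow\mathbb{Z},
\]
has $l$-adic completion the $l$-adic Poincar\'{e} duality pairing, which is nondegenerate modulo torsion under the above conjectures; Proposition \ref{z6} then shows $H^{i}(X_{\text{W\'{e}t}},\mathbb{Z}(r))'$ is free and finitely generated, of rank $\rho_{r}$ by property (B) and the Tate conjecture, so $e^{2r}$ on the primed groups has finite kernel and cokernel, $z(e^{2r})$ is defined, and $\chi'(X_{\text{W\'{e}t}},\mathbb{Z}(r))$ makes sense. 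Lastly, $U(H^{i}(X_{\text{W\'{e}t}},\mathbb{Z}(r)))$ is uniquely divisible by Corollary \ref{z4}: it is divisible, and its Tate module vanishes because $TH^{i}(X_{\text{W\'{e}t}},\mathbb{Z}(r))$ is, via the finite-coefficient sequences of (A) and the $l$-adic identification above, a cokernel of a map into a finitely generated $\mathbb{Z}_{l}$-module which is finite once the cycle class map is rationally surjective.

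It remains --- and this is the main obstacle --- to verify the identity \eqref{e6}. Completing the exact sequences of the first paragraph $l$-adically and substituting the identifications above rewrites $\chi'(X_{\text{W\'{e}t}},\mathbb{Z}(r))$ as precisely the alternating product over all $l$ of the Euler characteristics of the $l$-adic and $\nu_{s}(r)$-cohomology groups that occur in the main theorem of \cite{milne1986v}, while property (B) matches the cycle classes and hence the rank-$\rho_{r}$ contribution and the sign; the identity then follows from that theorem. The delicate points are (a) that $l$-adic completion commutes with the Hochschild--Serre sequences --- Bloch's complex has no automatic continuity with pro-finite coefficients, so this is a direct-limit argument relying on finiteness of the finite-coefficient cohomology --- and (b) matching the rigid Euler-characteristic formula of \cite{milne1986v} term by term, in particular accounting for the factor $z(e^{2r})$ (which records that in degrees $2r,2r+1$ the groups are only finitely generated) and for the de Rham--Witt contribution at $p$ responsible for the power $q^{\chi(X,\mathcal{O}_{X},r)}$.
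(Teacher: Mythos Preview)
Your strategy is the paper's: use property~(A) to get the exact sequences
\[
0\to H^{i}(X_{\text{W\'et}},\mathbb{Z}(r))^{(n)}\to H^{i}(X_{\text{\'et}},(\mathbb{Z}/n\mathbb{Z})(r))\to H^{i+1}(X_{\text{W\'et}},\mathbb{Z}(r))_{n}\to 0,
\]
pass to the limit, show $TH^{i}=0$ so that the completions coincide with $H^{i}(X_{\text{\'et}},\widehat{\mathbb{Z}}(r))$, read the formula off \cite{milne1986v}, and use property~(C) with Proposition~\ref{z6} for finite generation in degrees $2r,2r+1$. The Hochschild--Serre detour in your first paragraph is not actually used afterwards; the paper works directly on $X_{\text{W\'et}}$.

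Two points need sharpening. First, your handling of $TH^{i}=0$ glosses over the case $i=2r+2$: here $TH^{2r+2}$ sits as the cokernel of $H^{2r+1}(X_{\text{W\'et}},\mathbb{Z}(r))\symbol{94}\to H^{2r+1}(X_{\text{\'et}},\widehat{\mathbb{Z}}(r))$, and the cycle-class argument you invoke applies to degree $2r$, not $2r+1$. The paper closes this by the commutative square comparing $\widehat{e^{2r}}$ with $\epsilon^{2r}$ and using that $\epsilon^{2r}$ has finite cokernel. Second, the $l$-adic groups $H^{i}(X_{\text{\'et}},\mathbb{Z}_{l}(r))$ are not free of rank $\rho_{r}$ for $i=2r,2r+1$; they carry torsion. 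Accordingly Proposition~\ref{z6} is applied to $H^{2r}(X_{\text{W\'et}},\mathbb{Z}(r))'/\mathrm{tors}$, and one must still bound the torsion of $H^{2r}(X_{\text{W\'et}},\mathbb{Z}(r))'$: it injects into the torsion of $H^{2r}(X_{\text{\'et}},\widehat{\mathbb{Z}}(r))$, which is finite by \cite{gabber1983}.
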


\begin{proof}
We begin with a brief review of \cite{milne1986v}. For an integer
$n=n_{0}p^{s}$ with $\mathrm{gcd}(p,n_{0})=1$,%
\begin{align*}
H^{i}(X_{\text{\'{e}t}},(\mathbb{Z}{}/n\mathbb{Z}{})(r))  &  \overset
{\textup{{\tiny def}}}{=}H^{i}(X_{\text{\'{e}t}},\mu_{n_{0}}^{\otimes
r})\times H^{i-r}(X_{\text{\'{e}t}},\nu_{s}(r))\text{, and}\\
\newline H^{i}(X_{\text{\'{e}t}},\widehat{\mathbb{Z}}(r))  &  \overset
{\textup{{\tiny def}}}{=}\varprojlim\nolimits_{n}H^{i}(X_{\text{\'{e}t}%
},\mathbb{(\mathbb{Z}{}}/n\mathbb{Z}{})(r))
\end{align*}
(ibid. p.\,309). Let%
\[
\epsilon^{2r}\colon H^{2r}(X_{\text{\'{e}t}},\widehat{\mathbb{Z}}%
{}(r))\rightarrow H^{2r+1}(X_{\text{\'{e}t}},\widehat{\mathbb{Z}}{}(r))
\]
denote cup-product with the canonical element of $H^{1}(\Gamma,\widehat
{\mathbb{Z}}{})\simeq H^{1}(k_{\text{\'{e}t}},\widehat{\mathbb{Z}}{})$, and
let
\[
\chi(X,\widehat{\mathbb{Z}}(r))\overset{\textup{{\tiny def}}}{=}%
\prod\nolimits_{i\neq2r,2r+1}[H^{i}(X_{\text{\'{e}t}},\widehat{\mathbb{Z}}%
{}(r))]^{(-1)^{i}}z(\epsilon^{2r})
\]
when all terms are defined and finite (ibid. p.298). Theorem 0.1 (ibid. p.298)
states that $\chi(X,\widehat{\mathbb{Z}}(r))$ is defined if and only if
$S^{r}(X,l)$ holds for all $l$, in which case%
\begin{equation}
\lim_{t\rightarrow q^{-r}}Z(X,t)\cdot(1-q^{r}t)^{\rho_{r}}=\pm\chi
(X,\widehat{\mathbb{Z}}(r))\cdot q^{\chi(X,\mathcal{O}{}_{X},r)}. \label{e7}%
\end{equation}
In particular, if $S^{r}(X,l)$ holds for all $l$, then the groups
$H^{i}(X_{\text{\'{e}t}},\widehat{\mathbb{Z}}(r))$ are finite for all
$i\neq2r$, $2r+1$.

For each $n\geq1$ and $i\geq0$, property (A) of $\mathbb{Z}{}(r)$ gives us an
exact sequence%
\[
0\rightarrow H^{i}(X_{\text{W\'{e}t}},\mathbb{Z}{}(r))^{(n)}\rightarrow
H^{i}(X_{\text{\'{e}t}},(\mathbb{Z}{}/n\mathbb{Z}{})(r))\rightarrow
H^{i+1}(X_{\text{W\'{e}t}},\mathbb{Z}{}(r))_{n}\rightarrow0.
\]
The middle term is finite, and so $H^{i}(X_{\text{W\'{e}t}},\mathbb{Z}%
{}(r))^{(n)}$ is finite for all $i$ and $n$. On passing to the inverse limit,
we obtain an exact sequence%
\begin{equation}
0\rightarrow H^{i}(X_{\text{W\'{e}t}},\mathbb{Z}{}(r))\symbol{94}\rightarrow
H^{i}(X_{\text{\'{e}t}},\widehat{\mathbb{Z}}{}(r))\rightarrow TH^{i+1}%
(X_{\text{W\'{e}t}},\mathbb{Z}{}(r))\rightarrow0 \label{e5}%
\end{equation}
in which the middle term is finite for $i\neq2r,2r+1$. As $TH^{i+1}%
(X_{\text{W\'{e}t}},\mathbb{Z}{}(r))$ is torsion-free, it must be zero for
$i\neq2r,2r+1$. In other words, $TH^{i}(X_{\text{W\'{e}t}},\mathbb{Z}{}(r))=0$
for $i\neq2r+1,2r+2.$

So far we have used only conjecture $S^{r}(X,l)$ (all $l$) and property (A) of
$\mathbb{Z}{}(r)$. To continue, we need to use $T^{r}(X,l)$ (all $l$) and the
property (B) of $\mathbb{Z}{}(r)$. The $l$-Tate conjecture $T^{r}(X,l)$ for
all $l$ implies that the cokernel of the map $\mathrm{CH}^{r}(X)\otimes
_{\mathbb{Z}}\widehat{\mathbb{Z}}{}\rightarrow H^{2r}(X_{\text{\'{e}t}%
},\widehat{\mathbb{Z}}{}(r))$ is torsion. As this map factors through
$H^{2r}(X_{\text{W\'{e}t}},\mathbb{Z}{}(r))\symbol{94}$, it follows that
$TH^{2r+1}(X_{\text{W\'{e}t}},\mathbb{Z}{}(r))=0$ and $H^{2r}%
(X_{\text{W\'{e}t}},\mathbb{Z}{}(r))\symbol{94}\simeq H^{2r}(X_{\text{\'{e}t}%
},\widehat{\mathbb{Z}}{}(r))$. Consider the commutative diagram%
\[
\begin{tikzcd}
H^{2r}(X_{\text{W\'et}},\mathbb{Z}{}(r))\symbol{94}
\arrow{r}{\simeq}\arrow{d}{\widehat{e^{2r}}}
&H^{2r}(X_{\text{\'et}},\widehat{\mathbb{Z}}{}(r))\arrow{d}{\epsilon^{2r}}\\
H^{2r+1}(X_{\text{W\'et}},\mathbb{Z}{}(r))\symbol{94}\arrow{r}
&H^{2r+1}(X_{\text{\'et}},\widehat{\mathbb{Z}}{}(r)).
\end{tikzcd}
\]
As $\epsilon^{2r}$ has finite cokernel, so does the bottom arrow, and so
$TH^{2r+2}(X_{\text{W\'{e}t}},\mathbb{Z}{}(r))=0$. We have now shown that
\[
TH^{i}(X_{\text{W\'{e}t}},\mathbb{Z}{}(r))=0\text{ for all }i
\]
and so ((\ref{e5}) and Corollary \ref{z4})
\[
\left\{
\begin{array}
[c]{l}%
H^{i}(X_{\text{W\'{e}t}},\mathbb{Z}{}(r))\symbol{94}\simeq H^{i}%
(X_{\text{\'{e}t}},\widehat{\mathbb{Z}}{}(r))\\
U(H^{i}(X_{\text{W\'{e}t}},\mathbb{Z}{}(r))\text{ is uniquely divisible}%
\end{array}
\right.  \text{ for all }i.
\]
In particular, we have proved the first statement of the theorem except that
each group $H^{i}(X_{\text{W\'{e}t}},\mathbb{Z}{}(r))^{\prime}$ has been
replaced by its completion. It remains to prove that $H^{i}(X_{\text{W\'{e}t}%
},\mathbb{Z}{}(r))^{\prime}$ is finite for $i\neq2r,2r+1$ and is finitely
generated for $i=2r,2r+1$ (for then $H^{i}(X_{\text{W\'{e}t}},\mathbb{Z}%
{}(r))\symbol{94}\simeq H^{i}(X_{\text{W\'{e}t}},\mathbb{Z}{}(r))^{\prime
}\otimes\widehat{\mathbb{Z}}$).

The kernel of $H^{i}(X_{\text{W\'{e}t}},\mathbb{Z}{}(r))^{\prime}%
\rightarrow\left(  H^{i}(X_{\text{W\'{e}t}},\mathbb{Z}{}(r))^{\prime}\right)
\symbol{94}$ is $U(H^{i}(X_{\text{W\'{e}t}},\mathbb{Z}{}(r))^{\prime})=0$, and
so $H^{i}(X_{\text{W\'{e}t}},\mathbb{Z}{}(r))^{\prime}$ is finite for
$i\neq2r,2r+1$.

It remains to show that the groups $H^{2r}(X_{\text{W\'{e}t}},\mathbb{Z}%
{}(r))^{\prime}$ and $H^{2r+1}(X_{\text{W\'{e}t}},\mathbb{Z}{}(r))^{\prime}$
are finitely generated. For this we shall need property (C) of $\mathbb{Z}%
{}(r)$. For a fixed prime $l\neq p$, the pairings in (C) give rise to a
commutative diagram%
\[
\begin{tikzpicture}[text height=1.5ex, text depth=0.25ex]
\node (a) at (0,0) {$H^{2r}(X_{\text{W\'{e}t}},\mathbb{Z}{}(r))^{\prime}/\mathrm{tors}$};
\node (t) at (2.2,0) {$\times$};
\node (b) at (5,0) {$H^{2d-2r+1}(X_{\text{W\'{e}t}},\mathbb{Z}{}(d-r))^{\prime}/\mathrm{tors}$};
\node (c) at (9,0) {$\mathbb{Z}$};
\node (d) [below=of a] {$H^{2r}(X_{\text{\'{e}t}},\mathbb{Z}_{l}{}(r))/\mathrm{tors}$};
\node [below=of t] {$\times$};
\node (e) [below=of b] {$H^{2d-2r+1}(X_{\text{\'{e}t}},\mathbb{Z}_{l}{}(d-r))/\mathrm{tors}$};
\node (f) [below=of c] {$\mathbb{Z}{}_{l}$};
\draw[->,font=\scriptsize,>=angle 90]
(a) edge (d)
(b) edge (e)
(c) edge (f)
(b) edge (c)
(e) edge (f);
\end{tikzpicture}
\]
to which we wish to apply Proposition \ref{z6}. The bottom pairing is
nondegenerate, the group $U(H^{2r}(X_{\text{W\'{e}t}},\mathbb{Z}{}%
(r))^{\prime})$ is zero, and the group $H^{2d-2r+1}(X_{\text{W\'{e}t}%
},\mathbb{Z}{}(d-r))^{(l)}$ is finite, and so the proposition shows that
$H^{2r}(X_{\text{W\'{e}t}},\mathbb{Z}{}(r))^{\prime}/\mathrm{tors}$ is
finitely generated. Because $U(H^{2r}(X_{\text{W\'{e}t}},\mathbb{Z}%
{}(r))^{\prime})=0$, the torsion subgroup of $H^{2r}(X_{\text{W\'{e}t}%
},\mathbb{Z}{}(r))^{\prime}$ injects into the torsion subgroup of
$H^{2r}(X_{\text{\'{e}t}},\widehat{\mathbb{Z}}(r))$, which is finite
(\cite{gabber1983}). Hence $H^{2r}(X_{\text{W\'{e}t}},\mathbb{Z}{}%
(r))^{\prime}$ is finitely generated. The group $H^{2r+1}(X_{\text{W\'{e}t}%
},\mathbb{Z}{}(r))^{\prime}$ can be treated similarly.
\end{proof}

\begin{remark}
\label{z8}In the proof, we didn't use the full force of $T^{r}(X)$.
\end{remark}

We shall need the following standard result.

\begin{lemma}
\label{z9}Let $A$ be a (noncommutative) ring and let $\bar{A}$ be the quotient
of $A$ by a nil ideal $I$ (i.e., a two-sided ideal in which every element is
nilpotent). Then:

\begin{enumerate}
\item an element of $A$ is invertible if it maps to an invertible element of
$\bar{A}$;

\item every idempotent in $\bar{A}$ lifts to an idempotent in $A$, and any two
liftings are conjugate by an element of $A$ lying over $1_{\bar{A}}$;

\item let $a\in A$; every decomposition of $\bar{a}$ into a sum of orthogonal
idempotents in $\bar{A}$ lifts to a similar decomposition of $a$ in $A$.
\end{enumerate}
\end{lemma}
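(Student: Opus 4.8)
The plan is to establish (1) first and then derive (2) and (3) from it. For (1): if $a\in A$ reduces to a unit of $\bar A$, I would pick $b\in A$ with $ab-1\in I$ and $ba-1\in I$, write $ab=1-x$ with $x\in I$, and use that $x$ is nilpotent, say $x^{N}=0$, so that $(1-x)(1+x+\dots+x^{N-1})=1$ exhibits $ab$ as a unit of $A$; likewise $ba$ is a unit. Then $a$ has a right inverse (coming from $ab\in A^{\times}$) and a left inverse (coming from $ba\in A^{\times}$), hence is invertible. The useful special case is that $1+I\subseteq A^{\times}$.

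For (2): given an idempotent in $\bar A$, I would choose any lift $a\in A$ and set $t=a^{2}-a\in I$, with $t^{n}=0$. Working in the commutative subring $\mathbb{Z}[a]$, expand $1=\big(a+(1-a)\big)^{2n-1}$ and separate the binomial terms divisible by $a^{n}$ from those divisible by $(1-a)^{n}$ (every term falls into one of the two classes) to get a relation $1=a^{n}b+(1-a)^{n}c$ with $b,c\in\mathbb{Z}[a]$. Then $e:=a^{n}b$ satisfies $1-e=(1-a)^{n}c$, so $e(1-e)=a^{n}(1-a)^{n}bc=\big(a-a^{2}\big)^{n}bc=0$; thus $e^{2}=e$. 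Reducing the relation mod $I$ and multiplying by $\bar a$, where $\bar a^{2}=\bar a$, gives $\bar a=\bar a\bar b=\bar e$, so $e$ is the desired lift. For the conjugacy statement, let $e,f$ be idempotents with $e\equiv f\pmod I$ and put $u=ef+(1-e)(1-f)$; then $\bar u=1$, so $u\in A^{\times}$ by (1), and $eu=ef=uf$ yields $u^{-1}eu=f$ with $u$ lying over $1_{\bar A}$.

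For (3): write the decomposition as $\bar a=\bar e_{1}+\dots+\bar e_{m}$ with the $\bar e_{i}$ orthogonal idempotents, and induct on $m$, the case $m=1$ being (2). For $m>1$, lift $\bar e_{1}$ to an idempotent $e_{1}\in A$ via (2) and set $g=1-e_{1}$. The corner ring $gAg$ surjects onto $\bar g\bar A\bar g$ by $gzg\mapsto\bar g\bar z\bar g$, with kernel the (two-sided, nil) ideal $gIg$; since $\bar e_{i}\bar g=\bar e_{i}$ for $i\ge2$, the element $\bar e_{2}+\dots+\bar e_{m}$ is a sum of $m-1$ orthogonal idempotents of $\bar g\bar A\bar g$, which by induction lifts to orthogonal idempotents $e_{2},\dots,e_{m}\in gAg$. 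As each $e_{i}$ ($i\ge2$) lies in $gAg$ we have $e_{1}e_{i}=e_{i}e_{1}=0$, so $e_{1},\dots,e_{m}$ are orthogonal idempotents of $A$ lifting the $\bar e_{i}$; their sum is an idempotent lifting $\bar a$, and (when $a$ is itself idempotent) simultaneously conjugating the $e_{i}$ by a unit over $1_{\bar A}$ furnished by (2) makes the sum equal $a$.

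The only step that is not purely formal is the explicit idempotent in (2), where the care goes into checking that $a^{n}b$ is idempotent and reduces correctly; everything else is bookkeeping. If one prefers to avoid the one-shot polynomial, the same conclusion follows from the quadratically convergent substitution $a\mapsto 3a^{2}-2a^{3}$, under which $a^{2}-a$ moves into the square of the ideal it generates, so that finitely many iterations produce an exact idempotent lifting $a$; the uniqueness and passage-to-corners arguments are unaffected.
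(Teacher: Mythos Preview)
Your argument is correct and follows the same overall strategy as the paper: part (1) via the geometric series giving $1+I\subset A^{\times}$, the conjugacy in (2) via the unit $ef+(1-e)(1-f)$, and (3) deduced from (2) (the paper simply records that (c) ``follows easily from (b)'' without writing out the corner-ring induction you supply). The only substantive difference is the explicit idempotent lift in (2): the paper sets $a'=(1-(1-a)^{N})^{N}$, where $(a-a^{2})^{N}=0$, and checks directly that $a'^{2}=a'$ and $\bar a'=\bar a$, whereas you split $1=(a+(1-a))^{2n-1}=a^{n}b+(1-a)^{n}c$ and take $e=a^{n}b$. Both constructions live in the commutative subring $\mathbb{Z}[a]$ and are standard; neither buys anything the other does not.
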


\begin{nt}
We denote $a+I$ by $\bar{a}$.

(a) It suffices to consider an element $a$ such that $\bar{a}=1_{\bar{A}}$.
Then $(1-a)^{N}=0$ for some $N>0$, and so%
\[
\overbrace{\left(  1-(1-a)\right)  }^{\displaystyle{a}}\left(
1+(1-a)+(1-a)^{2}+\cdots+(1-a)^{N-1}\right)  =1.
\]

(b) Let $a$ be an element of $A$ such that $\bar{a}$ is idempotent. Then
$(a-a^{2})^{N}=0$ for some $N>0$, and we let $a^{\prime}=$ $(1-(1-a)^{N})^{N}%
$. A direct calculation shows that $a^{\prime}a^{\prime}=a^{\prime}$ and that
$\bar{a}^{\prime}=\bar{a}$.

Let $e$ and $e^{\prime}$ be idempotents in $A$ such that $\bar{e}=\bar
{e}^{\prime}$. Then $a\overset{\textup{{\tiny def}}}{=}e^{\prime
}e+(1-e^{\prime})(1-e)$ lies above $1_{\bar{A}}$ and satisfies $e^{\prime
}a=e^{\prime}e=ae$.

(c) Follows easily from (b).
\end{nt}

\begin{proposition}
\label{z10}Let $X$ be a smooth projective variety over a finite field $k$, and
let $r$ be an integer${}$. Assume that for some prime $l$ the ideal of
$l$-homologically trivial correspondences in $\mathrm{CH}^{\dim X}(X\times
X)_{\mathbb{Q}{}}$ is nil. Then $H^{i}(X_{\mathrm{et}},\mathbb{Z}{}(r))$ is
torsion for all $i\neq2r$, and the Tate conjecture $T^{r}(X)$ implies that
$H^{2r}(X_{\mathrm{et}},\mathbb{Z}{}(r))$ is finitely generated modulo torsion.
\end{proposition}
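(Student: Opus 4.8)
The plan is to use the nilpotence hypothesis, via Lemma~\ref{z9}, to produce a Chow--Künneth decomposition of the diagonal whose pieces split $H^{i}(X_{\text{\'{e}t}},\mathbb{Z}(r))\otimes\mathbb{Q}$, and then to pin down the Frobenius weight of each piece against the Weil conjectures. Set $d=\dim X$, let $A=\mathrm{CH}^{d}(X\times X)_{\mathbb{Q}}$ be the ring of self-correspondences under composition, and let $I\subset A$ be the ideal of $l$-homologically trivial correspondences, nil by hypothesis. Then $\bar{A}=A/I$ embeds in $\bigoplus_{j}\End_{\mathbb{Q}_{l}}(H^{j}(\bar{X}_{\text{\'{e}t}},\mathbb{Q}_{l}))$, so it is finite-dimensional; by the Weil conjectures the image $\bar{\pi}$ of the Frobenius correspondence $\pi$ has on each $H^{j}$ a characteristic polynomial $P_{j}$ with integer coefficients, and the $P_{j}$ are pairwise coprime since their roots have absolute value $q^{j/2}$. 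Hence the Künneth projectors $\pi_{j}$ (onto $H^{j}$) are $\mathbb{Q}$-polynomials in $\bar{\pi}$, so lie in $\bar{A}$; as $I$ is nil, Lemma~\ref{z9}(c) lifts $1=\sum_{j}\pi_{j}$ to $\Delta_{X}=\sum_{j}p_{j}$ with $p_{j}\in A$ orthogonal idempotents. Because $\mathbb{Z}(r)$ is functorial for correspondences (part of the formalism behind property~(C)), the $p_{j}$ act on the groups $H^{i}(X_{\text{\'{e}t}},\mathbb{Z}(r))$, and tensoring with $\mathbb{Q}$ gives $H^{i}(X_{\text{\'{e}t}},\mathbb{Q}(r))=\bigoplus_{j}p_{j*}H^{i}(X_{\text{\'{e}t}},\mathbb{Q}(r))$, where $H^{i}(X_{\text{\'{e}t}},\mathbb{Q}(r)):=H^{i}(X_{\text{\'{e}t}},\mathbb{Z}(r))\otimes\mathbb{Q}$.

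Now the weight argument. Rationally, étale hypercohomology of $\mathbb{Z}(r)$ agrees with motivic cohomology (\cite{geisser2005}), so $H^{i}(X_{\text{\'{e}t}},\mathbb{Q}(r))\cong\mathrm{CH}^{r}(X,2r-i)_{\mathbb{Q}}$, which vanishes for $i>2r$ (higher Chow groups vanish in negative simplicial degree) --- this settles $i>2r$ at once --- and on it the geometric Frobenius acts as multiplication by $q^{r}$. For each $j$ put $\phi_{j}=p_{j}\,\pi\,p_{j}$, an element of the corner ring $\End(h^{j}(X))=p_{j}Ap_{j}$; its image in $p_{j}\bar{A}p_{j}\subset\End_{\mathbb{Q}_{l}}(H^{j})$ is the restriction of $\bar{\pi}$, so $P_{j}(\phi_{j})$ is $l$-homologically trivial, lies in $I$, and is therefore nilpotent, say $P_{j}(\phi_{j})^{N}=0$. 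Since $\phi_{j}$ acts on the summand $p_{j*}H^{i}(X_{\text{\'{e}t}},\mathbb{Q}(r))$ through the scalar $q^{r}$, the scalar $P_{j}(q^{r})^{N}$ annihilates it; but $P_{j}(q^{r})\neq0$ for $j\neq2r$, because $|q^{r}|\neq q^{j/2}$. Hence $p_{j*}H^{i}(X_{\text{\'{e}t}},\mathbb{Q}(r))=0$ for $j\neq2r$, and $H^{i}(X_{\text{\'{e}t}},\mathbb{Q}(r))$ reduces to its $h^{2r}(X)$-component for every $i$.

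It remains to handle the $h^{2r}(X)$-component in degrees $i\le2r$. The comparison map of property~(A) (compatibly with cycle classes by~(B)) sends it into the $l$-adic cohomology $H^{i}(X_{\text{\'{e}t}},\mathbb{Q}_{l}(r))$, whose $h^{2r}$-part, by the Hochschild--Serre sequence recalled earlier and the Weil conjectures, is zero unless $i=2r$, where it is $H^{2r}(\bar{X}_{\text{\'{e}t}},\mathbb{Q}_{l}(r))^{\Gamma}$. For $i<2r$ it therefore suffices that this comparison map be injective on the $h^{2r}(X)$-component, which then forces that component to vanish; for $i=2r$ it suffices that, modulo torsion, it be injective with finitely generated image inside the finite-dimensional space $H^{2r}(\bar{X}_{\text{\'{e}t}},\mathbb{Q}_{l}(r))^{\Gamma}$ --- finite-dimensionality being supplied by $T^{r}(X)$, which implies $T^{r}(X,l)$ and $S^{r}(X,l)$. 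Granting these, property~(A) gives that $H^{i}(X_{\text{\'{e}t}},\mathbb{Z}(r))^{(n)}$ is finite for all $i,n$, the torsion subgroup of $H^{2r}(X_{\text{\'{e}t}},\mathbb{Z}(r))$ is finite by \cite{gabber1983}, and finite generation modulo torsion follows by feeding the duality pairing of property~(C) into Proposition~\ref{z6}, exactly as in the proof of Theorem~\ref{z7}.

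The hard part is this last injectivity: that the $l$-homologically trivial part of $\mathrm{CH}^{r}(h^{2r}(X),n)_{\mathbb{Q}}$ vanishes for $n\ge0$, which is where the nilpotence hypothesis must be used a second time. The naive device --- compose a homologically trivial class with its transpose to obtain a nilpotent, hence zero, endomorphism of $h^{2r}(X)$ --- does not apply here, because for $n\ge1$ the transpose lands in a motivic cohomology group of negative degree or negative weight, which is zero, so there is nothing to compose against; one needs instead to show directly that, under the nilpotence hypothesis, the $l$-adic realization is faithful on the relevant motivic cohomology of the pure motive $h^{2r}(X)$, so that vanishing of the realization forces vanishing of the group. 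Carrying this out, together with the passage between the a priori non-finitely-generated integral groups and their $l$-adic completions through Proposition~\ref{z6}, is the technical heart of the argument.
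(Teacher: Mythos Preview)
Your first two paragraphs match the paper: lift the K\"unneth projectors to $A=\mathrm{CH}^d(X\times X)_{\mathbb{Q}}$ via Lemma~\ref{z9}, note that $P_j(\varpi_X)^N=0$ on $h^jX$ by nilpotence, and use that the Frobenius correspondence acts as the scalar $q^r$ on $H^i(X,\mathbb{Q}(r))\simeq K_{2r-i}(X)^{(r)}$ (the paper phrases this via the $q$th Adams operator) to kill every summand except the one coming from $h^{2r}X$.

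The gap is your handling of the $h^{2r}X$-component. You propose to show that the $l$-adic realization is injective on $H^{\ast}(h^{2r}X,\mathbb{Q}(r))$, but you do not prove it --- you explicitly call it ``the hard part'' and ``the technical heart'' and then stop. Your fallback for $i=2r$ (Proposition~\ref{z6} plus \cite{gabber1983}, ``exactly as in the proof of Theorem~\ref{z7}'') does not help either: that argument needs $\bigcap_n l^nM=0$, so it only controls $H^{2r}$ modulo its uniquely divisible part $U(H^{2r})$, which is precisely what Theorem~\ref{z7} already gives. The whole purpose of Proposition~\ref{z10} in the paper is to show that $H^{2r}(X,\mathbb{Z}(r))_{\mathbb{Q}}$ is itself finite-dimensional, so that $U(H^{2r})=0$; invoking the machinery of Theorem~\ref{z7} here is circular.

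The paper sidesteps realization-injectivity entirely. Under $T^r(X)$ one has $P_{2r}(T)=Q(T)(T-q^r)^{\rho_r}$ with $Q(q^r)\neq0$; a B\'ezout identity $1=q(T)Q(T)^N+p(T)(T-q^r)^{N\rho_r}$ evaluated at $T=\varpi_X$ gives orthogonal idempotents splitting $h^{2r}X=M_1\oplus M_2$, and the same weight argument kills all of $H^{\ast}(M_1,\mathbb{Q}(r))$. The idea you are missing is that $M_2$ is \emph{isogenous to $(\mathbb{L}^{\otimes r})^{\rho_r}$} (this is the input from \cite{jannsen2007}): on $M_2$ the Frobenius satisfies $(\varpi_X-q^r)^{N\rho_r}=0$, so its $l$-adic realization is a sum of copies of $\mathbb{Q}_l(-r)$, and nilpotence lifts this identification to Chow motives up to isogeny. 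One then computes $H^i(\mathbb{L}^{\otimes r},\mathbb{Z}(r))$ directly as a summand of $H^i(\mathbb{P}^d,\mathbb{Z}(r))$, which is $\mathbb{Z}$ for $i=2r$ and torsion otherwise. This gives both conclusions at once, with no injectivity of realization required.
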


\begin{proof}
This is essentially proved in \cite{jannsen2007}, pp.\thinspace131--132, and
so we only sketch the argument. Set $d=\dim X$ and let $k=\mathbb{F}{}_{q}$.

According to Lemma \ref{z9}, there exist orthogonal idempotents $\pi
_{0},\ldots,\pi_{2d}$ in $\mathrm{CH}^{\dim X}(X\times X)_{\mathbb{Q}{}}$
lifting the K\"{u}nneth components of the diagonal in the $l$-adic topology.
Let $h^{i}X=(hX,\pi_{i})$ in the category of Chow motives over $k$. Let
$P_{i}(T)$ denote the characteristic polynomial $\det(T-\varpi_{X}|H^{i}%
(\bar{X}_{\mathrm{et}},\mathbb{Q}{}_{l})$ of the Frobenius endomorphism
$\varpi_{X}$ of $X$ acting on $H^{i}(\bar{X}_{\mathrm{et}},\mathbb{Q}{}_{l})$.
Then $P_{i}(\varpi_{X})$ acts as zero on the homological motive $h_{\text{hom}%
}^{i}X$, and so $P_{i}(\varpi_{X})^{N}$ acts as zero on $h^{i}X$ for some
$N\geq1$ (from the nil hypothesis). We shall need one last property of Bloch's
complex, namely, that $H^{i}(X_{\mathrm{W\acute{e}t}},\mathbb{Z}%
(r))_{\mathbb{Q}{}}\simeq K_{2r-i}(X)^{(r)}$ where $K_{2r-i}(X)^{(r)}$ is the
subspace of $K_{2r-i}(X)_{\mathbb{Q}{}}$ on which the $n$th Adams operator
acts as $n^{r}$ for all $r$. 

The $q$th Adams operator acts as the Frobenius operator, and so $\varpi_{X}$
acts as multiplication by $q^{r}$ on $K_{2r-i}(X)^{(r)}$. Therefore
$H^{i}(X_{\mathrm{W\acute{e}t}},\mathbb{Z}{}(r))_{\mathbb{Q}{}}$ is killed by
$P_{i}(q^{r})^{N}$, which is nonzero for $i\neq2r$ (by the Weil conjectures),
and so $H^{i}(X_{\mathrm{W\acute{e}t}},\mathbb{Z}{}(r))$ is torsion for
$i\neq2r$.

The Tate conjecture implies that $P_{2r}(T)=Q(T)\cdot(T-q^{r})^{\rho_{r}}$
where $Q(q^{r})\neq0$, and so%
\[
1=q(T)Q(T)^{N}+p(T)(T-q^{r})^{N\rho_{r}},\quad\text{some }q(T)\text{, }%
p(T)\in\mathbb{Q}{}[T].
\]
As before, $P_{2r}(\omega_{X})^{N}$ acts as zero on $h^{2r}X$ for some
$N\geq1$. Therefore $q(\varpi_{X})Q(\varpi_{X})^{N}$ and $p(\varpi_{X}%
)(\varpi_{X}-q^{r})^{N\rho_{r}}$ are orthogonal idempotents in $\End(h^{2r}X)$
with sum $1$, and correspondingly $h^{2r}X=M_{1}\oplus M_{2}$. Now
$H^{2r}(M_{1},\mathbb{\mathbb{Z}{}}{}(r))_{\mathbb{Q}{}}=0$ because
$Q(\varpi_{X})^{N}$ is zero on $M_{1}$ and $Q(q^{r})\neq0$. On the other hand,
$M_{2}$ is isogenous to $(\mathbb{L}{}^{\otimes r})^{\rho_{r}}$ where
$\mathbb{L}{}$ is the Lefschetz motive (\cite{jannsen2007}, p.\thinspace132),
and so $H^{2r}(M_{2},\mathbb{Z}{}(r))$ differs from
\[
H^{2r}(\mathbb{L}{}^{\otimes r},\mathbb{Z}{}(r))^{\rho_{r}}\simeq
H^{2r}(\mathbb{P}{}^{d},\mathbb{Z}{}(r))^{\rho_{r}}\simeq\mathbb{Z}{}%
^{\rho_{r}}%
\]
by a torsion group.
\end{proof}

\begin{nt}
When $k=\mathbb{F}{}_{q}$, the $q$th Adams operator acts as $\varpi$ (Hiller
1981, \S 5; Soul\'{e} 1985, 8.1), and so $K_{i}(X)^{(j)}$ is the subspace on
which $\varpi$ acts as $q^{j}$ (because the $m^{i}$-eigenspace of the $m$th
Adams operators is independent of $m$, Seiler 1988, Theorem 1).
\end{nt}

\begin{theorem}
\label{z11}Let $X$ be a smooth projective variety over a finite field such
that the Tate conjecture $T^{r}(X)$ is true for some integer $r\geq0$. Assume
that, for some prime $l$, the ideal of $l$-homologically trivial
correspondences in $\mathrm{CH}^{\dim X}(X\times X)_{\mathbb{Q}{}}$ is nil.
Then $\chi(X_{\text{W\'{e}t}},\mathbb{Z}{}(r))$ is defined, and%
\begin{equation}
\lim_{t\rightarrow q^{-r}}Z(X,t)\cdot(1-q^{r}t)^{\rho_{r}}=\pm\chi
(X_{\text{W\'{e}t}},\mathbb{Z}(r))\cdot q^{\chi(X,\mathcal{O}{}_{X}%
,r)}.\label{e8}%
\end{equation}
In particular, the groups $H^{i}(X_{\text{W\'{e}t}},\mathbb{Z}{}(r))$ are
finite for $i\neq2r,2r+1$. For $i=2r,2r+1$, they are finitely generated.
\end{theorem}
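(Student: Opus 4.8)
The plan is to deduce Theorem~\ref{z11} from Theorem~\ref{z7}. Theorem~\ref{z7} already gives, under $T^{r}(X)$ alone, that $\chi^{\prime}(X_{\text{W\'{e}t}},\mathbb{Z}(r))$ is defined and satisfies \eqref{e6}, that $U(H^{i}(X_{\text{W\'{e}t}},\mathbb{Z}(r)))$ is uniquely divisible for every $i$, and that $H^{i}(X_{\text{W\'{e}t}},\mathbb{Z}(r))^{\prime}$ is finite for $i\neq2r,2r+1$ and finitely generated for $i=2r,2r+1$. So it is enough to prove that, under the extra nil hypothesis, $U(H^{i}(X_{\text{W\'{e}t}},\mathbb{Z}(r)))=0$ for every $i$: then $H^{i}(X_{\text{W\'{e}t}},\mathbb{Z}(r))^{\prime}=H^{i}(X_{\text{W\'{e}t}},\mathbb{Z}(r))$ for all $i$, the map $e^{2r}$ agrees with the one it induces on the quotients, $\chi=\chi^{\prime}$, \eqref{e8} becomes \eqref{e6}, and the finiteness and finite-generation assertions for $H^{i}(X_{\text{W\'{e}t}},\mathbb{Z}(r))$ follow from the corresponding ones for $H^{i}(X_{\text{W\'{e}t}},\mathbb{Z}(r))^{\prime}$.

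To kill the $U(H^{i})$, I would use Proposition~\ref{z10} --- or rather the assertions established in its proof, which are for Weil-\'{e}tale cohomology (via $H^{i}(X_{\text{W\'{e}t}},\mathbb{Z}(r))_{\mathbb{Q}}\simeq K_{2r-i}(X)^{(r)}$ and the $q$th Adams operator). For $i\neq2r$ the nil hypothesis makes $H^{i}(X_{\text{W\'{e}t}},\mathbb{Z}(r))$ a torsion group; since $U(H^{i})$ is uniquely divisible, hence torsion-free, it must be $0$. For $i=2r$, the nil hypothesis together with $T^{r}(X)$ makes $H^{2r}(X_{\text{W\'{e}t}},\mathbb{Z}(r))$ finitely generated modulo torsion. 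Because $U(H^{2r})$ is torsion-free it injects into $H^{2r}(X_{\text{W\'{e}t}},\mathbb{Z}(r))/(\text{torsion})$, which is finitely generated; hence $U(H^{2r})$ is finitely generated, and a finitely generated $\mathbb{Q}$-vector space is $0$. This gives $U(H^{i})=0$ for all $i$, as required.

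The only delicate point is the passage from Proposition~\ref{z10}, whose statement is phrased in terms of $H^{i}(X_{\text{\'{e}t}},\mathbb{Z}(r))$, to the Weil-\'{e}tale groups that occur here; but the facts needed --- torsion for $i\neq2r$, and finitely generated modulo torsion for $i=2r$ --- are exactly the ones proved for $H^{i}(X_{\text{W\'{e}t}},\mathbb{Z}(r))$ inside the proof of \ref{z10}. Apart from this bookkeeping there is essentially nothing new: all the analytic and cohomological input is already in Theorem~\ref{z7} and Proposition~\ref{z10}, and the nil hypothesis enters only to annihilate the uniquely divisible subgroups that $T^{r}(X)$ by itself leaves uncontrolled, thereby collapsing $\chi^{\prime}$ to $\chi$.
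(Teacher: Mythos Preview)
Your proposal is correct and follows essentially the same route as the paper: reduce to Theorem~\ref{z7} and use Proposition~\ref{z10} (indeed its Weil-\'{e}tale content, as you note) to show that each $H^{i}(X_{\text{W\'{e}t}},\mathbb{Z}(r))$ is finitely generated modulo torsion, whence the uniquely divisible subgroup $U(H^{i})$ must vanish. The paper phrases this uniformly (``finitely generated modulo torsion'' for all $i$, hence no nonzero $\mathbb{Q}$-subspace) rather than splitting into the cases $i\neq2r$ and $i=2r$, but the substance is identical.
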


\begin{proof}
This will follow from Theorem \ref{z7} once we show that the groups
$U^{i}\overset{\textup{{\tiny def}}}{=}U(H^{i}(X_{\text{W\'{e}t}},\mathbb{Z}%
{}(r)))$ are zero. Because $H^{i}(X_{\text{W\'{e}t}},\mathbb{Z}{}(r))$ is
finitely generated modulo torsion (Proposition \ref{z10}), it does not contain
a nonzero $\mathbb{Q}{}$-vector space, and so $U^{i}=0$ (Corollary \ref{z4}).
\end{proof}

\begin{remark}
\label{z12} For a smooth projective algebraic variety $X$ whose Chow motive is
finite-dimensional, the ideal of $l$-homologically trivial correspondences in
$\mathrm{CH}^{\dim X}(X\times X)_{\mathbb{Q}{}}$ is nil for all prime $l$
(Kimura). It is conjectured (Kimura and O'Sullivan) that the Chow motives of
algebraic varieties are always finite-dimensional, and this is known for those
in the category generated by the motives of abelian varieties. On the other
hand, Beilinson has conjectured that, over finite fields, rational equivalence
with $\mathbb{Q}{}$-coefficients coincides with with numerical equivalence,
which implies that the ideal in question is always null (not merely nil).
\end{remark}

{
\bibliographystyle{cbe}
\bibliography{D:/Current/refs}
}

\end{document}